 \newcommand{\Ext}{\operatorname{Ext}}
\newcommand{\N}{\mathbb{N}}
\newcommand{\Z}{\mathbb{Z}}
\newcommand{\C}{\mathbb{C}}
   \theoremstyle{plain}
   \newtheorem{theorem}{Theorem}[section]
   \newtheorem{prop}[theorem]{Proposition}
   \newtheorem{lemma}[theorem]{Lemma}  
   \newtheorem{cor}[theorem]{Corollary}
   \theoremstyle{definition}
   \newtheorem{defn}[theorem]{Definition}
   \newtheorem{example}[theorem]{Example}
   \theoremstyle{remark}
   \newtheorem{remark}[theorem]{Remark}
   \numberwithin{equation}{section}
\title[On reduced amalgamated free products...]{On reduced amalgamated free products of $ C^* $-algebras and the MF property}
\author{Jonas Andersen Seebach}
        \date{\today}
\date{\today}
\email{curly@imf.au.dk}
\address{Institut for matematiske fag, Ny Munkegade, 8000 Aarhus C, Denmark}
\begin{document}

\maketitle

\begin{abstract} We establish the MF property of the reduced group $ C^* $-algebra of an amalgamated free product of countable Abelian discrete groups. This result is then used to give a characterization of the amalgamated free products of Abelian groups for which the BDF semigroup of the reduced group $ C^* $-algebra is a group. Along the way we get a tensor product factorization of the corresponding group von Neumann algebra. We end the exposition by applying the ideas from the first part to give a few more examples of groups with a reduced group $ C^* $-algebra which is MF.
\end{abstract}

\section{Introduction}
Since Anderson \cite{A} found the first example of a $ C^* $-algebra with non-invertible extensions by the compact operators $ \mathbb{K} $ on a separable Hilbert space, several new examples of $ C^* $-algebras with this property have been discovered. Here invertibility is in the sense of Brown, Douglas and Fillmore, see, e.g., \cite{Ar}. Most notably Haagerup and Thorbj{\o}rnsen \cite{HT} showed that there is a non-invertible extension of $ C^*_r(\mathbb{F}_n) $ by $ \mathbb{K} $ where $ C^*_r(\mathbb{F}_n) $ is the reduced group $ C^* $-algebra of the free group $ \mathbb{F}_n $ on $ n \in \{ 2, 3,  \cdots \} \cup \{\infty\} $ generators, thus providing the first 'non-artificial' example of such an algebra.

This paper grew out of an investigation of the extensions of the reduced group $ C^* $-algebras of the so-called torus knot groups which are the one-relator groups with presentations $ \langle a_1, a_2 \mid a_1^k a_2 ^{-m} \rangle $, $ k, m \in \N $. Somehow these groups seemed to be the natural next step upwards from the free group case as they only have one relation and can be realized as an amalgamated free product of copies of $ \Z $. 

The main result of the paper is an inclusion of the group $ C^* $-algebra of an amalgamated free product of Abelian groups into an algebra which is MF in the sense of Blackadar and Kirchberg, \cite{BK}. Since the MF property passes to $ C^* $-subalgebras this result gives the first examples of reduced amalgamated free products of $ C^* $-algebras with amalgamation over an infinite-dimensional $ C^* $-subalgebra which are MF, c.f., the recent work of Li and Shen \cite{LS}. See \cite{V} for more on reduced amalgamated free products and their relation to free probability.

Since almost none of the treated groups are amenable, we get the existence of non-invertible extensions of the corresponding reduced group $ C^* $-algebras by $ \mathbb{K} $ as easy corollaries.

\subsubsection*{Acknowledgements}
It is a pleasure to thank my advisors Steen Thorbj{\o}rnsen and Klaus Thomsen for many helpful discussions and suggestions during the preparation of this manuscript. A special thanks goes to Uffe Haagerup for various indispensable inputs. Part of this manuscript was written while visiting Nate Brown at the Pennsylvania State University and I wish to express my gratitude for the hospitality shown to me there.

\section{Preliminaries}
Most of the paper is concerned with the reduced group $ C^* $-algebras of discrete groups and we recall some standard facts.

If $ G $ is a discrete group we let $ C^*_r(G) $ denote the reduced group $ C^* $-algebra associated to $ G $, i.e., the unital $ C^* $-subalgebra of $ B(\ell^2(G)) $ generated by the left-regular representation. We let $ L(G) := C_r^*(G)'' $ be the group von Neumann algebra. The algebra $ L(G) $, and hence also $ C_r^*(G) $, is endowed with a faithful tracial state which can be realized as the vector state corresponding to any  $ 1_{\{g\}}  \in \ell^2(G) $, $ g \in G $, where $ 1_{\{g\}} $ is the characteristic function corresponding to the singleton $ \{g\} $.

If $ H $ is a subgroup of $ G $, then $ C^*_r(H)  $ is a unital subalgebra of $ C^*_r(G) $. 
If we have a family of discrete groups $ G_i $, $ i \in I $ with a common subgroup $ H $ we denote by $ \star_H G_i $ their amalgamated free product. If there is no amalgamation (i.e., if $ H = \{1\} $), we simply write $ \star G_i $ for the free product.

We will need the following elementary observation on reduced group $ C^* $-algebras.

\begin{lemma} \label{Induct}
Let $ G_i $, $ i \in \N $ be discrete groups such that $ G_i \subseteq G_{i+1} $ for all $ i $ and let $ G $ be the (direct) union of the $ G_i $. Then we have an isomorphism of $ C^* $-algebras
\begin{equation*}
\lim_k C^*_r(\cup_{i=1}^k G_i) \simeq C_r^*(G),
\end{equation*}
where the inductive limit is taking with respect to the natural inclusions $ C_r^*(\cup_{i=1}^k G_i) \to C^*_r(\cup_{i=1}^{k+1} G_i)$.
\end{lemma}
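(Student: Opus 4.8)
The plan is to obtain the isomorphism as the map induced by the universal property of the inductive limit, and then verify injectivity and surjectivity separately. Since the $G_i$ are nested, write $H_k := \cup_{i=1}^k G_i = G_k$, a subgroup of $G$ with $H_k \subseteq H_{k+1}$ and $G = \cup_k H_k$. By the remark in the Preliminaries, for each $k$ there is a canonical unital inclusion $\iota_k \colon C^*_r(H_k) \hookrightarrow C^*_r(G)$ sending $\lambda_{H_k}(g)$ to $\lambda_G(g)$, and these maps are compatible with the connecting maps $C^*_r(H_k) \to C^*_r(H_{k+1})$ of the inductive system (which are inclusions of the same kind). Hence the universal property yields a unital $*$-homomorphism $\Phi \colon \lim_k C^*_r(H_k) \to C^*_r(G)$, and it remains to show $\Phi$ is bijective.

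For injectivity I would first recall why each connecting map, and each $\iota_k$, is isometric: decomposing $\ell^2(G) = \bigoplus \ell^2(H_k g)$ over a set of right coset representatives, each summand is invariant under $\lambda_G|_{H_k}$ and carries a copy of the left regular representation of $H_k$; thus $\lambda_G|_{H_k}$ is unitarily equivalent to a multiple of $\lambda_{H_k}$, and is therefore faithful, i.e. isometric, on $C^*_r(H_k)$. Consequently the canonical maps $C^*_r(H_k) \to \lim_k C^*_r(H_k)$ are isometric, so $\Phi$ is isometric on the dense $*$-subalgebra obtained as the union of their images; since a $*$-homomorphism that is isometric on a dense subset is isometric (it is automatically contractive, and the norm is continuous), $\Phi$ is isometric, hence injective.

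For surjectivity, note that the image of $\Phi$ is complete, hence closed in $C^*_r(G)$, and it contains $\iota_k(\lambda_{H_k}(g)) = \lambda_G(g)$ for every $g \in H_k$; letting $k$ vary, the image contains $\lambda_G(g)$ for all $g \in G$, hence it contains the closed linear span of $\{\lambda_G(g) : g \in G\}$, which is all of $C^*_r(G)$. Therefore $\Phi$ is the desired $*$-isomorphism.

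There is no genuinely hard step here; the only point requiring care is the faithfulness of the subgroup inclusion $C^*_r(H) \hookrightarrow C^*_r(G)$ — equivalently, the injectivity of the connecting maps of the system. This is precisely what ensures that $\lim_k C^*_r(H_k)$ does not collapse and that $\Phi$ is isometric rather than merely contractive; everything else is formal inductive-limit bookkeeping.
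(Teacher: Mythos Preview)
Your proof is correct and follows essentially the same approach as the paper: both construct the map via the universal property of the inductive limit from the compatible family of subgroup inclusions $C^*_r(H_k)\hookrightarrow C^*_r(G)$, and then observe it is an isomorphism. The paper is much terser (it simply asserts the induced map ``is readily seen to be an isomorphism'', relying on the standard fact recorded in the Preliminaries that $C^*_r(H)\subseteq C^*_r(G)$ for a subgroup), whereas you spell out the coset-decomposition argument for faithfulness and the density argument for surjectivity; these details are fine and do not constitute a different method.
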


\begin{proof}
We have the standard inclusions of $ C^* $-algebras corresponding to the inclusion of subgroups which for each $ k $ make the following diagram commute
\begin{equation*}
\xymatrix{C_r^*(\cup_{i=1}^k G_i) \ar[d] \ar[r] & C_r^*(G) \\
C^*_r(\cup_{i=1}^{k+1} G_i) \ar[ur]} 
\end{equation*}

The induced $ * $-homomorphism $ \varphi: \lim_k C_r^*(\cup_{i=1}^k G_i) \to  C_r^*(G)  $ is readily seen to be an isomorphism. 
\end{proof}

It is a standard fact that if a $ C^* $-algebra $ A $ is MF but not quasidiagonal, then $ A $ has an extension by $ \mathbb{K} $ which is not invertible in the sense of Brown, Douglas and Fillmore. By a result of Rosenberg (see Theorem V.4.2.13 in \cite{B} for an elegant proof) $  C^*_r(G)$ is not quasidiagonal if $ G $ is a non-amenable group. Thus to prove the existence of a non-invertible extension of $ C^*_r(G) $ by $ \mathbb{K} $, i.e., that the extension semigroup, $ \Ext(C_r^*(G)) $, is not a group, one needs only establish the MF-property of the algebra and realize that the group is not amenable.

\section{The result}

Our main result will be a consequence of several lemmas which we prove below. The first lemma is a mere observation.

\begin{lemma} \label{GNSequi}
Let $ A, B $ be unital $ C^* $-algebras with a surjective $ * $-homomorphism $ \pi: A \to B $ and a state $ \varphi: B \to \C $. Let $ \tilde{\varphi} = \varphi \circ \pi $, then the GNS-representation corresponding to $ \tilde{\varphi} $ is unitarily equivalent to $ \pi_{\varphi} \circ \pi $ where $ \pi_{\varphi} $ is the GNS-representation corresponding to $ \varphi $.
\end{lemma}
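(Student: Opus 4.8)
The plan is to exhibit the intertwining unitary explicitly from the two GNS constructions. Recall that, writing $N_\varphi = \{b \in B : \varphi(b^*b) = 0\}$ for the left kernel of $\varphi$, the Hilbert space $H_\varphi$ is the completion of $B/N_\varphi$ in the inner product $\langle b + N_\varphi, b' + N_\varphi\rangle = \varphi(b'^{\,*}b)$, with $\pi_\varphi(b)(b' + N_\varphi) = bb' + N_\varphi$ and canonical cyclic vector $\xi_\varphi = 1_B + N_\varphi$; similarly $H_{\tilde\varphi}$ is the completion of $A/N_{\tilde\varphi}$, where $N_{\tilde\varphi} = \{a \in A : \tilde\varphi(a^*a) = 0\}$. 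The first step is to record the identity $\tilde\varphi(a^*a) = \varphi(\pi(a)^*\pi(a))$, which gives $N_{\tilde\varphi} = \pi^{-1}(N_\varphi)$; hence $\pi$ induces a well-defined linear map $\bar\pi : A/N_{\tilde\varphi} \to B/N_\varphi$, $a + N_{\tilde\varphi} \mapsto \pi(a) + N_\varphi$.

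Next I would check that $\bar\pi$ is isometric for the GNS inner products, since $\langle \pi(a') + N_\varphi, \pi(a) + N_\varphi\rangle = \varphi(\pi(a)^*\pi(a')) = \tilde\varphi(a^*a') = \langle a' + N_{\tilde\varphi}, a + N_{\tilde\varphi}\rangle$, and that its range is $\pi(A)/N_\varphi = B/N_\varphi$ by surjectivity of $\pi$, which is dense in $H_\varphi$. Therefore $\bar\pi$ extends uniquely to a unitary $U : H_{\tilde\varphi} \to H_\varphi$.

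Finally I would verify the intertwining relation on the dense subspace $A/N_{\tilde\varphi}$: for all $a, a' \in A$,
\begin{align*}
U\,\pi_{\tilde\varphi}(a)\big(a' + N_{\tilde\varphi}\big) &= \pi(aa') + N_\varphi = \pi_\varphi\big(\pi(a)\big)\big(\pi(a') + N_\varphi\big) \\
&= \big(\pi_\varphi \circ \pi\big)(a)\, U\big(a' + N_{\tilde\varphi}\big),
\end{align*}
where multiplicativity of $\pi$ is used in the middle equality. By continuity this yields $U\pi_{\tilde\varphi}(a) = (\pi_\varphi \circ \pi)(a)\,U$ for every $a \in A$, which is the desired unitary equivalence. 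There is essentially no obstacle in this argument; the only points needing (brief) care are the identification $N_{\tilde\varphi} = \pi^{-1}(N_\varphi)$ and the appeal to surjectivity of $\pi$ to ensure $U$ has dense — hence full — range. Alternatively, one can dispense with the explicit $U$ altogether: $(\pi_\varphi \circ \pi, H_\varphi, \xi_\varphi)$ is a cyclic representation of $A$ — cyclicity because $(\pi_\varphi\circ\pi)(A)\xi_\varphi = \pi_\varphi(B)\xi_\varphi$ is dense — whose associated state is $a \mapsto \langle \pi_\varphi(\pi(a))\xi_\varphi, \xi_\varphi\rangle = \varphi(\pi(a)) = \tilde\varphi(a)$, so the uniqueness clause of the GNS theorem applies immediately.
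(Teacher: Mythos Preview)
Your argument is correct. In fact, the alternative you sketch at the end is precisely the paper's own proof: it simply observes that $(\pi_\varphi\circ\pi, H_\varphi, 1_B)$ is a GNS triple for $\tilde\varphi$ --- cyclicity coming from surjectivity of $\pi$, the vector state identity from $\tilde\varphi(a)=\langle\pi_\varphi(\pi(a))1_B,1_B\rangle$ --- and then invokes uniqueness. Your main route, building the intertwining unitary $U$ by hand from the induced map $\bar\pi:A/N_{\tilde\varphi}\to B/N_\varphi$, is the unpacked version of that same uniqueness argument: it makes the content of the GNS uniqueness clause explicit rather than appealing to it as a black box. The paper's version is shorter; yours has the advantage of being self-contained and showing exactly where surjectivity of $\pi$ and the identity $N_{\tilde\varphi}=\pi^{-1}(N_\varphi)$ enter.
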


\begin{proof}
It suffices to show that $ (\pi_{\varphi} \circ \pi, H_{\varphi}, 1_B) $ is a GNS-triple for $ \tilde{\varphi} $. Clearly, $ 1_B $ is a cyclic vector and
\begin{equation*}
\tilde{\varphi}(a) = \varphi(\pi(a)) = \langle \pi_{\varphi}(\pi(a)) 1_B, 1_B \rangle 
\end{equation*}
when $ a \in A $. This proves the claim.
\end{proof}

We briefly introduce a notion from harmonic analysis that will prove useful to us. 

For a unitary representation $ \sigma $ of a (discrete) group $ G $, we let $ h_\sigma $ denote the $ * $-homomorphism on the full group $ C^* $-algebra, $ C^*(G) $, that extends $ \sigma $.
\begin{defn}
If $ \sigma, \tau $ are unitary representations of the discrete group $ G $, we say that $ \sigma $ is \emph{weakly contained} in $ \tau $ and write $ \sigma \prec \tau $ if $ \ker h_\tau \subseteq \ker h_\sigma $.
\end{defn}

We refer to the books \cite{Di} and \cite{BHV} for the basics of this concept.

The following result which is interesting in its own right, is an important ingredient in our proof. The result is in fact an immediate consequence of the so-called 'continuity of induction' due to J.M.G. Fell applied to the trivial representation of the subgroup $ H $, see, e.g., \cite{BHV} Theorem F.3.5. We give a self-contained proof below.

\begin{prop} \label{uffe}
Let $ G $ be a discrete group with a normal subgroup $ H $. The canonical quotient map $ q : G \to G/H $ extends to a $*$-homomorphism $ \pi : C_r^*(G) \to C_r^*(G/H) $ if and only if $ H $ is amenable.
\end{prop}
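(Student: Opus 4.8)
The plan is to prove both implications by relating the regular representation of $G$ to representations of $G/H$ via the quotient map, using the characterization of amenability through weak containment of the trivial representation.

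First I would treat the ``if'' direction. Suppose $H$ is amenable. On $C^*(G)$ consider the representation $\lambda_{G/H} \circ q$, i.e.\ the composite of the quotient $G \to G/H$ with the left-regular representation $\lambda_{G/H}$ of $G/H$; this is a unitary representation of $G$ and extends to a $*$-homomorphism $h_{\lambda_{G/H} \circ q} : C^*(G) \to C^*_r(G/H)$. To obtain the desired $\pi : C^*_r(G) \to C^*_r(G/H)$, I must show this factors through the regular representation of $G$, i.e.\ $\ker h_{\lambda_G} \subseteq \ker h_{\lambda_{G/H}\circ q}$, equivalently $\lambda_{G/H}\circ q \prec \lambda_G$. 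Now $\lambda_{G/H}$ is weakly equivalent to the representation of $G/H$ induced from the trivial representation of the trivial subgroup; pulling back along $q$, the representation $\lambda_{G/H}\circ q$ is unitarily equivalent to the representation of $G$ induced from the trivial representation $1_H$ of $H$ (this is the standard ``induction in stages''/imprimitivity computation, using that $G/H$ carries counting measure). Since $H$ is amenable, $1_H \prec \lambda_H$, and weak containment is preserved by induction (continuity of induction), so $\operatorname{Ind}_H^G 1_H \prec \operatorname{Ind}_H^G \lambda_H \cong \lambda_G$. Hence $\lambda_{G/H}\circ q \prec \lambda_G$, giving the factorization and the map $\pi$; on generators $\pi(\lambda_G(g)) = \lambda_{G/H}(gH)$, which is the asserted extension of $q$.

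For the ``only if'' direction, suppose such a $*$-homomorphism $\pi : C^*_r(G) \to C^*_r(G/H)$ exists with $\pi(\lambda_G(g)) = \lambda_{G/H}(gH)$. Precompose with the restriction to $C^*_r(H) \subseteq C^*_r(G)$: this sends $\lambda_H(h)$ to $\lambda_{G/H}(1) = 1$ for every $h \in H$, so the trivial (one-dimensional) representation of $H$ factors through $C^*_r(H)$. Equivalently, $1_H \prec \lambda_H$, which is precisely one of the standard characterizations of amenability of $H$. Concretely, one can also see this at the level of the canonical trace: the composition $\tau_{G/H} \circ \pi$ is a state on $C^*_r(G)$ whose restriction to $C^*_r(H)$ is the trivial character, and the existence of a character on $C^*_r(H)$ forces $H$ to be amenable.

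I expect the main obstacle to be the first direction, specifically making the identification $\lambda_{G/H}\circ q \cong \operatorname{Ind}_H^G 1_H$ precise and then invoking continuity of induction (Fell's theorem, \cite{BHV} Theorem~F.3.5) in the stated self-contained manner; the bookkeeping of the induced-representation Hilbert space and the unitary intertwiner is routine but needs care, and one must be slightly attentive that $H$ is \emph{normal} is what makes $G/H$ a group and the pulled-back regular representation the relevant object. The reverse direction is comparatively soft, reducing immediately to a well-known amenability criterion for $H$.
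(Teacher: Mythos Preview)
Your proposal is correct. The ``only if'' direction is identical to the paper's. For the ``if'' direction you take a genuinely different (though closely related) route.

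The paper proves the ``if'' direction in a self-contained way: starting from the amenability of $H$, it picks a net of unit vectors $(f_i)\subseteq\ell^2(H)$ with $f_i\ast\tilde f_i\to 1$ pointwise on $H$, extends them by zero to unit vectors $g_i\in\ell^2(G)$, and uses the resulting vector states to show that the indicator function $1_H$ extends to a state $\psi$ on $C^*_r(G)$. It then identifies the GNS representation of $\psi$ (via the preceding Lemma~\ref{GNSequi}) with the canonical map $C^*(G)\to C^*_r(G/H)$, which therefore factors through $C^*_r(G)$. Your argument instead identifies $\lambda_{G/H}\circ q$ with $\operatorname{Ind}_H^G 1_H$, invokes $1_H\prec\lambda_H$ from amenability, and then applies Fell's continuity of induction together with $\operatorname{Ind}_H^G\lambda_H\cong\lambda_G$ to conclude $\lambda_{G/H}\circ q\prec\lambda_G$.

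Both are valid; in fact the paper explicitly remarks, just before the proposition, that the result ``is in fact an immediate consequence of the so-called `continuity of induction' due to J.M.G.\ Fell applied to the trivial representation of the subgroup $H$'' and cites \cite{BHV} Theorem~F.3.5 --- precisely your approach --- before opting to ``give a self-contained proof below.'' So your route is the standard high-level one, conceptually clean but relying on the induced-representation machinery and Fell's theorem as a black box; the paper's route trades that machinery for an explicit construction of the relevant positive-definite function on $C^*_r(G)$, which is more elementary and keeps the argument internal to the paper.
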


\begin{proof}
Suppose $ \pi : C_r^*(G) \to C_r^*(G/H)  $ extends $ q $. We have a natural inclusion $ \iota: C_r^*(H) \to C_r^*(G) $ and since $ \pi \circ \iota(h) = 1 $ for all $ h \in H $ we get a $ * $-homomorphism $ \pi \circ \iota : C^*_r(H) \to \C  $ that sends all group elements to $1$. In other words the trivial representation of $ H $ is weakly contained in the left regular representation of $ H $ and consequently, $ H $ is amenable.

Conversely assume that $ H $ is amenable. We wish to find a $ * $-homomorphism $ \pi:C^*_r(G) \to C_r^*(G/H) $ that will make
\begin{equation*}
\xymatrix{C^*(G) \ar[d] \ar[r] & C_r^*(G/H) \\
C^*_r(G) \ar@{-->}[ur]} 
\end{equation*}
commute.

By Proposition 8.5 of \cite{P} we may find a net of unit vectors $ (f_i) \subseteq \ell^2(H) $ such that
\begin{equation*}
1 = \lim (f_i * \tilde{f_i})(h)
\end{equation*}
for all $ h \in H $. The function $ \tilde{f_i} $ is given by $ \tilde{f_i}(h) = \overline{f_i(h^{-1})} $, $ h \in H $ and $ * $ denotes convolution.
Define a net of unit vectors in $ \ell^2(G) $ by
\begin{equation*}
g_i(k) = \begin{cases} f_i(k) & \text{ if } k \in H \\
0 & \text{ if } k \notin H
\end{cases}
\end{equation*}
Then 
\begin{equation*}
1_H(k) = \lim (g_i * \tilde{g_i})(k)
\end{equation*}
for all $ k \in G $. 
We wish to define a state $ \psi $ on $ C^*_r(G) $ which is equal to $ 1_H $ on $ G $.
To see that this is possible let $ \sum_n \gamma_n k_n \in \C[G] $, then
\begin{align*}
\left \langle \left ( \sum_n \gamma_n k_n \right) g_i, g_i \right \rangle & = \sum_n \gamma_n \sum_{k \in G} g_i(k_n^{-1}k) \overline{g_i(k)} \\
&= \sum_n \gamma_n \sum_{k \in G} g_i(k) \overline{g_i(k_nk)}\\
&= \sum_n \gamma_n (g_i *\tilde{g_i})(k_n^{-1}) \\
& \to \sum_{n} \gamma_n 1_H(k_n).
\end{align*}
Since $ g_i $ is a unit vector in $ \ell^2(G) $ for each $ i $, it follows from the CBS-inequality that $ 1_H$ may be extended to a map on $ C^*_r(G) $. This extension is clearly a state.

We may consider the canonical map $ \mu : C^*(G) \to C^*_r(G) $. By composition we get a state $ \varphi = \psi \circ \mu $ on $ C^*(G) $ which is nothing but the state on $ C^*(G) $ which equals $ 1_H $ on $ G $. The GNS-representation of this state is unitarily equivalent to the map in the top row of the diagram above and so the previous lemma tells us that the map we are looking for is the GNS-representation of $ \psi $. 
\end{proof}

Note that if $ H=G $, Proposition \ref{uffe} reduces to a well-known characterization of amenability.

Like Proposition \ref{uffe} above, the following result is also due to Fell.

\begin{lemma} \label{Fell2}
Let $ \lambda $ denote the left regular representation of the discrete group $ G $. If $ \sigma $ is a unitary representation such that $ \sigma \prec \lambda $, then $ \sigma \otimes \pi \prec \lambda $ for any unitary representation $ \pi $ of $ G $.
\end{lemma}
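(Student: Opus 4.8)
The plan is to deduce the statement from two standard facts: the \emph{Fell absorption principle}, which says that $\lambda\otimes\pi$ is unitarily equivalent to a multiple of $\lambda$, and the observation that weak containment --- in the sense used here, i.e.\ inclusion of kernels on $C^*(G)$ --- is preserved when one tensors by a fixed representation.

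First I would verify Fell absorption. On $\ell^2(G)\otimes H_\pi$ define the unitary $W$ by $W(1_{\{g\}}\otimes\xi)=1_{\{g\}}\otimes\pi_g\xi$; a one-line computation gives $W(\lambda_g\otimes 1)W^*=\lambda_g\otimes\pi_g$ for every $g\in G$, so $\lambda\otimes\pi$ is unitarily equivalent to $\lambda\otimes 1_{H_\pi}$, the direct sum of $\dim H_\pi$ copies of $\lambda$. Since $h_{\lambda\otimes 1_{H_\pi}}(x)=h_\lambda(x)\otimes 1_{H_\pi}$ vanishes precisely when $h_\lambda(x)$ does, we obtain $\ker h_{\lambda\otimes\pi}=\ker h_\lambda$; in particular $\lambda\otimes\pi\prec\lambda$.

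Next I would show that $\sigma\prec\tau$ implies $\sigma\otimes\pi\prec\tau\otimes\pi$ for any unitary representation $\pi$ (all tensor products of $C^*$-algebras below being minimal). The tool is the comultiplication $\Delta:C^*(G)\to C^*(G)\otimes C^*(G)$ determined by $\Delta(u_g)=u_g\otimes u_g$ on the canonical generating unitaries, which exists because $g\mapsto u_g\otimes u_g$ is a unitary representation of $G$, together with the identity $h_{\rho_1\otimes\rho_2}=(h_{\rho_1}\otimes h_{\rho_2})\circ\Delta$, which is immediate on group elements. Now $\sigma\prec\tau$ means precisely that there is a $*$-homomorphism $q:h_\tau(C^*(G))\to h_\sigma(C^*(G))$ with $q\circ h_\tau=h_\sigma$; functoriality of the minimal tensor product for arbitrary $*$-homomorphisms makes $q\otimes\id$ a $*$-homomorphism between the relevant tensor products, and then
\begin{equation*}
h_{\sigma\otimes\pi}=(h_\sigma\otimes h_\pi)\circ\Delta=(q\otimes\id)\circ(h_\tau\otimes h_\pi)\circ\Delta=(q\otimes\id)\circ h_{\tau\otimes\pi},
\end{equation*}
so $\ker h_{\tau\otimes\pi}\subseteq\ker h_{\sigma\otimes\pi}$, i.e.\ $\sigma\otimes\pi\prec\tau\otimes\pi$. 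Since weak containment is obviously transitive, applying this with $\tau=\lambda$ and combining with the first step gives $\sigma\otimes\pi\prec\lambda\otimes\pi\prec\lambda$, which is the assertion.

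The point demanding the most care is the functoriality of the minimal tensor product with respect to the \emph{possibly non-injective} homomorphism $q$: one factors $q$ through its image, uses that injective $*$-homomorphisms tensor spatially, and that quotient maps are contractive for the minimal norm. If one prefers to avoid this, the same argument goes through verbatim with $C^*(G\times G)\cong C^*(G)\otimes_{\max}C^*(G)$ in place of the minimal tensor product, since the maximal tensor product is functorial for all $*$-homomorphisms; alternatively one can bypass $C^*$-algebra machinery entirely and argue directly with functions of positive type, but the tensor-product-of-$*$-homomorphisms route seems the most economical.
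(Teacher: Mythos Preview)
Your argument is correct and follows exactly the same route as the paper's proof: first use Fell absorption to get $\lambda\otimes\pi\sim\lambda^{(\dim\pi)}\prec\lambda$, then use that $\sigma\prec\lambda$ implies $\sigma\otimes\pi\prec\lambda\otimes\pi$, and conclude by transitivity. The only difference is that the paper simply cites Theorem~2.5.5 of \cite{BO} for absorption and Proposition~F.3.2 of \cite{BHV} for the monotonicity step, whereas you supply self-contained proofs of both; your comultiplication argument for monotonicity (and the care you take with functoriality of $\otimes_{\min}$ for non-injective maps) is a clean way to unpack what the cited reference is doing.
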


\begin{proof}
By Fell's absorption principle, see, e.g., Theorem 2.5.5 of \cite{BO}, and Proposition F.3.2 of \cite{BHV}
\begin{equation*}
\sigma \otimes \pi \prec \lambda \otimes \pi \sim \lambda^{(\dim\pi)} \prec \lambda.
\end{equation*}
The equivalence, $ \sim $, being unitary equivalence.
\end{proof}

For a discrete Abelian group $ G $ we will let $ \hat{G} $ denote the unitary dual of $ G $. That is, $ \hat{G} $ is the set of unitary equivalence classes of irreducible unitary representations of $ G $. These are by Schur's Lemma all one-dimensional and so $ \hat{G} $ may be identified with the character space of $ C^*(G) $, i.e., $ C^*(G) \simeq C(\hat{G}) $. The unitary dual is a compact Hausdorff group in the topology of pointwise convergence on $ G $ (= the weak* topology from $ C^*(G) $) under pointwise multiplication.

\begin{lemma} \label{metrizable}
Let $ G $ be a discrete Abelian group. If $ G $ is countable then $ \hat{G} $ is metrizable. 
\end{lemma}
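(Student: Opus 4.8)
The plan is to exhibit an explicit compatible metric on $\hat{G}$ using the fact that $G$ is countable. First I would enumerate the group elements as $G = \{g_1, g_2, g_3, \dots\}$ (allowing repetitions if $G$ is finite, though in that case the statement is trivial). Since each $\chi \in \hat{G}$ is a character with values in the unit circle $\mathbb{T}$, I would define
\begin{equation*}
d(\chi, \chi') = \sum_{n=1}^{\infty} \frac{1}{2^n} |\chi(g_n) - \chi'(g_n)|.
\end{equation*}
The series converges uniformly since each summand is bounded by $2^{1-n}$, so $d$ is a well-defined function on $\hat{G} \times \hat{G}$.

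Next I would verify that $d$ is a metric: symmetry and the triangle inequality are immediate from the corresponding properties of the modulus on $\mathbb{C}$, and positivity follows because $d(\chi,\chi') = 0$ forces $\chi(g_n) = \chi'(g_n)$ for every $n$, i.e.\ $\chi = \chi'$ as the $g_n$ exhaust $G$.

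Then I would check that the topology induced by $d$ coincides with the topology of pointwise convergence on $G$ (which the excerpt identifies with the weak* topology from $C^*(G) \simeq C(\hat{G})$). One inclusion is easy: if $\chi_k \to \chi$ pointwise on $G$, then each term in the sum defining $d(\chi_k,\chi)$ tends to $0$ and is dominated by $2^{1-n}$, so by dominated convergence (for series) $d(\chi_k,\chi)\to 0$. For the converse, if $d(\chi_k,\chi) \to 0$ then for each fixed $n$ we have $\frac{1}{2^n}|\chi_k(g_n) - \chi(g_n)| \le d(\chi_k,\chi) \to 0$, so $\chi_k(g_n)\to\chi(g_n)$; thus $d$-convergence implies pointwise convergence. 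Since $\hat{G}$ is compact Hausdorff, a continuous bijection between the two topologies (the identity map) is automatically a homeomorphism, but in fact the two-sided sequential argument already shows the topologies agree directly. Hence $\hat{G}$ is metrizable.

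I do not anticipate a serious obstacle here; the only point requiring a little care is confirming that $d$-convergence and pointwise convergence genuinely generate the same topology rather than merely having the same convergent sequences — but since both are topologies on a compact Hausdorff space and the identity map is continuous in the direction ``pointwise $\Rightarrow$ $d$'' (equivalently $d$ is a continuous function on the compact space $\hat{G}\times\hat{G}$ with its product weak* topology), compactness upgrades this to a homeomorphism. Alternatively one can simply invoke the standard fact that a compact Hausdorff space is metrizable iff it is second countable, and note that $C(\hat G) \simeq C^*(G)$ is separable because $G$ is countable, so $\hat G$ is second countable; I would mention this as the cleaner high-level route and keep the explicit metric as the hands-on version.
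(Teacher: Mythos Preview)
Your proof is correct and follows essentially the same route as the paper: both build an explicit metric as a weighted sum $\sum 2^{-n}|f_n(x)-f_n(y)|$ over a countable separating family and then use the compact-Hausdorff bijection trick to match topologies. The only cosmetic difference is that the paper takes the $f_n$ to be an arbitrary dense sequence in $C(\hat G)$ (and appeals to Urysohn's Lemma for positivity), whereas you use the group elements themselves, which is slightly cleaner since they automatically separate characters.
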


\begin{proof}
This is all very standard. If $ G $ is countable, $ C(\hat{G}) = C^*(G) $ is separable. Take a dense sequence $ f_n, $ $ n \in \N $ in $ C(\hat{G}) $ and define a metric $ d $ on $ \hat{G} $ by
\begin{equation*}
d(x,y) = \sum_n \frac{1}{2^{n}\|f_n \|} |f_n(x) - f_n(y) |.
\end{equation*}
One easily checks, by use of Urysohn's Lemma, that $ d $ is indeed a metric. To see that $ d $ induces the right topology on $ \hat{G} $ let $ x_\lambda \to x $ in $ \hat{G} $ and let $ \varepsilon > 0 $, then
\begin{equation*}
d(x_\lambda, x) \leq \sum_{n=1}^N \frac{1}{2^{n}\|f_n \|} |f_n(x_\lambda) - f_n(x) | + \tfrac{\varepsilon}{2}
\end{equation*}
for a suitable $ N $ independent of $ \lambda $ and so $ \lim_\lambda d(x_\lambda, x) = 0 $. The identity map on $ \hat{G} $ is then a continuous bijection from a compact space to a Hausdorff space, hence a homeomorphism.
 \end{proof}
 
 \begin{remark}
 The converse of Lemma \ref{metrizable} is of course true but we will not need it in the following.
 \end{remark}
 
\begin{theorem} \label{newmain}
Let $ G_i $, $ i \in I $ be a finite or countably infinite collection of countable discrete Abelian groups with a common subgroup $ H $. Let $ G $ denote the amalgamated free product $ \star_H G_i $ of the $ G_i $'s with amalgamation over $ H $. Then we have an injective $ * $-homomorphism
\begin{equation*}
C^*_r(G) \to C^*_r(\star (G_i/H)) \otimes L^\infty(\hat{H}),
\end{equation*}
where the algebra of equivalence classes of measurable essentially bounded functions is with respect to Haar measure on $ \hat{H} $. 
\end{theorem}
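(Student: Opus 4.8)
The plan is to build the embedding by combining three ingredients: the quotient maps $G_i \to G_i/H$ which amalgamate to give a map on the free product, the characters of $H$ (living in $\hat H$) which twist the regular representation, and Fell's absorption (Lemma \ref{Fell2}) which keeps everything weakly contained in the regular representation of $G$ so that the resulting map factors through $C^*_r(G)$ and is injective. More precisely, for each character $\chi \in \hat H$ I would first extend $\chi$ to a representation of $G$: since each $G_i$ is Abelian, $H$ is central in $G_i$, and by the universal property of the amalgamated free product one can choose (using that $\hat H \to \hat{G_i}$ is surjective, as $H \hookrightarrow G_i$ with $G_i$ Abelian, or more elementarily, since we only need \emph{some} compatible extensions) a unitary representation $\rho_\chi$ of $G$ restricting to $\chi$ on $H$. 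Let $\lambda_{G/H}$ denote the composite of the quotient $G \to \star(G_i/H)$ with the left regular representation of $\star(G_i/H)$. The candidate map is
\begin{equation*}
\Phi(g) = \lambda_{G/H}(g) \otimes \rho_g,
\end{equation*}
where the second leg is assembled into a representation on $\ell^2(\hat H)$ (equivalently $L^2(\hat H)$) so that the full second-leg representation is the direct integral $\int_{\hat H}^\oplus \rho_\chi \, d\chi$; this direct integral, restricted to $H$, is the regular representation of $\hat H$ acting by multiplication by characters, which is exactly where $L^\infty(\hat H)$ enters as the von Neumann algebra generated by the image of $H$.

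Next I would check that $\Phi$ is weakly contained in $\lambda_G$, hence extends to a $*$-homomorphism $C^*_r(G) \to C^*_r(\star(G_i/H)) \otimes L^\infty(\hat H)$. The key point is that the representation $\sigma$ obtained by inducing the trivial representation from $H$ to $G$ — whose disintegration over $\hat H$ gives the $\rho_\chi$'s on the $H$-part and $\lambda_{G/H}$ on the quotient part — satisfies $\sigma \prec \lambda_G$; this is precisely Fell's continuity of induction (the self-contained $H$-normal case being Proposition \ref{uffe}, here used in its non-normal form, which is why the statement cites Fell directly). Granting $\sigma \prec \lambda_G$, Lemma \ref{Fell2} lets us tensor with any auxiliary representation and stay inside $\lambda_G$; one then identifies $\Phi$ with (a sub-representation of) such a tensor and concludes weak containment, so $\Phi$ descends to $C^*_r(G)$.

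For injectivity I would use the trace: $C^*_r(G)$ carries the canonical faithful tracial state $\tau_G$, and it suffices to show $\Phi$ is trace-preserving for a faithful trace on the target, since a $*$-homomorphism out of $C^*_r(G)$ that preserves a faithful trace is automatically injective. The target $C^*_r(\star(G_i/H)) \otimes L^\infty(\hat H)$ has the product trace $\tau_{\star(G_i/H)} \otimes \int_{\hat H}(\,\cdot\,)\,d\chi$, which is faithful. On a group element $g \in G$ one computes: if $g \notin H$ then its image in $\star(G_i/H)$ is either nontrivial (giving trace $0$ on the first leg) or, if $g$ maps into $H/H = \{1\}$ but $g \notin H$ — which cannot happen since the amalgamated free product is constructed so that the preimage of $1$ under $G \to \star(G_i/H)$ is exactly $H$ — so we get $0 = \tau_G(g)$; and if $g \in H$, the first leg contributes $1$ and the second leg contributes $\int_{\hat H} \chi(g)\,d\chi = 0$ unless $g = 1$, matching $\tau_G$ exactly. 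Hence $\Phi$ preserves the canonical trace and is injective; metrizability of $\hat H$ (Lemma \ref{metrizable}) and Lemma \ref{Induct} would be used to reduce the countable case to the finitely-generated/finite one or to make the measure-theoretic direct-integral constructions legitimate.

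\textbf{Main obstacle.} The delicate step is establishing $\sigma \prec \lambda_G$ for the induced representation on the amalgamated free product \emph{without} quoting Fell's theorem as a black box — i.e., producing an explicit asymptotically-$\lambda_G$-supported family of vectors (or positive-definite functions) realizing the matrix coefficients of $\sigma$, analogous to the vectors $g_i$ constructed in the proof of Proposition \ref{uffe} but now living on $G$ rather than on a normal-quotient situation. The amalgamated-free-product structure means one must control how reduced words interact with an approximate invariant mean on $H$; I expect this to be the technical heart of the argument, with everything else (the trace computation, the tensor-product packaging, the measurability via Lemma \ref{metrizable}) being essentially routine.
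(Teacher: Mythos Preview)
Your overall architecture is the same as the paper's: build the map as $\rho\otimes\sigma$ where $\rho$ comes from the quotient $G\to\star(G_i/H)$ and $\sigma$ is a representation of $G$ into unitaries of $L^\infty(\hat H)$ obtained by measurably extending characters of $H$ to each $G_i$; then use Lemma~\ref{Fell2} for weak containment and check injectivity via the trace. That is exactly what the paper does.

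However, you have misidentified the difficulty. Since each $G_i$ is Abelian, $H$ is \emph{central} in every $G_i$, and therefore $H$ is central (in particular normal) in $G=\star_H G_i$. So Proposition~\ref{uffe} applies verbatim to give $\rho:C^*_r(G)\to C^*_r(\star(G_i/H))$ weakly contained in $\lambda_G$, and Lemma~\ref{Fell2} then immediately gives weak containment of $\rho\otimes\sigma$ for \emph{any} unitary representation $\sigma$. There is no ``non-normal form'' needed, and no delicate control of reduced words against an approximate invariant mean; your ``main obstacle'' evaporates once you notice $H$ is central. (Relatedly, your description of $\operatorname{Ind}_H^G 1$ as disintegrating over $\hat H$ into the $\rho_\chi$'s is off: when $H$ is normal this induced representation is just the pull-back of $\lambda_{G/H}$ and is trivial on $H$.)

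The genuine technical point---which you mention only in passing---is the \emph{measurable} choice of an extension $\chi\mapsto\rho_\chi$, i.e.\ of a character of $G_i$ restricting to $\chi$ on $H$, for each $i$. The paper does this concretely: the restriction map $\hat{G_i}\to\hat H$ is a continuous surjection of compact groups, Lemma~\ref{metrizable} makes both spaces metrizable, and a classical selection lemma gives a Borel section $e_i:\hat H\to\hat{G_i}$. The representation $\sigma$ is then $\sigma(g)(\omega)=(e_i(\omega))(g)$ for $g\in G_i$, which is well-defined on $G$ because all the $e_i$'s agree on $H$ by construction. Everything else---weak containment and the trace computation---is, as you say, routine.
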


\begin{proof}
Since $ G/H = \star (G_i/H)$, Proposition \ref{uffe} gives a unital $ * $-homomorphism $ \rho :C^*_r(G) \to C^*_r(\star (G_i/H)) $. Consequently, $ \rho $ determines a unitary representation of $ G $ which is weakly contained in the left regular representation.

It is a classical result in harmonic analysis that the restriction map gives a homeomorphic isomorphism of compact Hausdorff groups
\begin{equation*}
\hat{G_i}/\{ \omega \in \hat{G_i} \mid \omega(h) =1, \ h \in H \} \to \hat{H}.
\end{equation*}
See, e.g., Theorem 4.39 in \cite{F}.

Now, since a surjective continuous map between compact metric spaces admits a Borel section (see for instance \cite{BR} for an elegant proof of this classical fact), we have, by Lemma \ref{metrizable} a Borel section for the quotient map
\begin{equation*}
 \hat{G_i} \to \hat{G_i}/\{ \omega \in \hat{G_i} \mid \omega(h) =1, \ h \in H \}.
\end{equation*}
Let $ e_i: \hat{H} \to \hat{G_i} $ be the Borel map obtained by composition of the maps just considered, i.e., $ e_i(\omega) $ is a (measurable) choice of an extension of $ \omega $ to a character on all of $ G_i $. 

Consider the group homomorphism $ \sigma_i  $ from $ G_i $ to the unitary group of $ L^\infty(\hat{H}) $ given by sending $ g \in G_i $ to the function
\begin{equation*}
\omega \mapsto (e_i (\omega))(g)
\end{equation*}
for $ \omega \in \hat{H} $.

Since $ \sigma_i(h) = \sigma_j(h) $ for all $ i, j $ and $ h \in H $, we get an induced unitary representation $ \sigma $ of $ G $ and by Lemma \ref{Fell2} we then get a $ * $-homomorphism $\psi : C^*_r(G) \to C^*_r(\star (G_i/H)) \otimes L^\infty(\hat{H})  $ by considering the tensor product of the unitary representation corresponding to $ \rho $ with $\sigma $.

\bigskip

We show that $ \psi $ is injective by considering the trace. Indeed, consider the tensor product of the standard trace $ \tau $ on $ C^*_r(\star (G_i/H))   $ and the normalized Haar measure $ m $ on $ L^\infty(\hat{H}) $. By composing this tensor product with $ \psi $ we get a trace on $ C^*_r(G) $. We claim that this trace is equal to the canonical faithful trace on $ C^*_r(G) $ which will ensure the injectivity of $ \psi $. To see that this is true, it suffices to show that  
\begin{equation*} \label{trace}
 \tau \otimes m(\psi(g)) = \begin{cases} 0 \quad \text{ if } g \neq 1 \\ 1 \quad \text{ if } g = 1 \end{cases} 
\end{equation*}
for $ g \in G \subseteq C^*_r(G) $.

Clearly, the equation above is satisfied for $ g=1 $. The equation is obviously also satisfied if $ \rho(g) \neq 1$. 

If $ \rho(g)=1 $ and $ g \neq 1 $, then $ g \in H \backslash \{1\} $ so by the Gelfand-Raikov Theorem (Theorem 3.34 of \cite{F}) there is $ \omega_0 \in \hat{H} $ such that $ \omega_0(g) \neq 1 $ and so by invariance of Haar measure
\begin{equation*}
\tau \otimes m(\psi(g))= m(\sigma(g)) = \int_{\hat{H}} \omega(g) \, dm(\omega) = \omega_0(g) \int_{\hat{H}} \omega(g) \, dm(\omega),
\end{equation*}
which implies that $ \tau \otimes m(\psi(g))=0 $.
\end{proof}

The representation $ \sigma $ in the proof of Theorem \ref{Hovedresultat} may seem like a somewhat mysterious object but in concrete cases it may have a nice description as the following example shows.

\begin{example}
Consider the torus knot group $ \Gamma_{k,m} = \langle a_1, a_2 \mid a_1^k a_2 ^{-m} \rangle $ from the introduction. This can be realized as the amalgamated free product $ \Z \star_\Z \Z $ where the subgroup embeds in the first factor by multiplication by $ k $ and in the second by multiplication by $ m $. Let for $ r \in \N $, $ \varphi_r : \mathbb{T}= \hat{\Z} \to \C $ be given by $ \varphi_r(e^{i t}) = e^{it/r} $, $ t \in [0,2\pi) $ and consider the unitary representation $ \zeta $ of $ \langle a_1, a_2 \mid a_1^ka_2^{-m} \rangle $ on $ L^2(\mathbb{T}) $ given by
\begin{equation*}
\zeta (a_1) = \varphi_k \quad \text{and} \quad \zeta(a_2) = \varphi_m
\end{equation*}
where the functions $ \varphi_r $ are identified with the multiplication operators they induce on $ L^2(\mathbb{T}) $.
This naturally occuring representation is exactly (a choice of) the representation $ \sigma $ in the proof above.
\end{example}

The following result is an immediate consequence of Theorem \ref{newmain} and \cite{HLSW}.

\begin{theorem} \label{MF eksempler}
Let $ G_i $, $ i \in I $ be a finite or countably infinite collection of countable discrete Abelian groups with a common subgroup $ H $. Then $ C^*_r(\star_H G_i) $ is MF.
\end{theorem}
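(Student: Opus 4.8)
The plan is to deduce Theorem \ref{MF eksempler} from Theorem \ref{newmain} by checking that the target algebra
\begin{equation*}
C^*_r(\star (G_i/H)) \otimes L^\infty(\hat H)
\end{equation*}
is MF and then invoking the permanence of the MF property under passage to $C^*$-subalgebras (recorded in \cite{BK}). Since Theorem \ref{newmain} provides an injective $*$-homomorphism of $C^*_r(\star_H G_i)$ into this tensor product, MF-ness of the bigger algebra immediately transfers to $C^*_r(\star_H G_i)$.

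So the real content is to show $C^*_r(\star (G_i/H)) \otimes L^\infty(\hat H)$ is MF. First I would observe that $L^\infty(\hat H)$ is a commutative von Neumann algebra with separable predual (here I use that $\hat H$ is a compact metrizable group by Lemma \ref{metrizable}, so Haar measure is a standard measure), hence it is hyperfinite and in particular MF (indeed it is AF-embeddable, being an inductive limit of finite-dimensional commutative algebras in the appropriate sense; more directly, separable commutative von Neumann algebras are quasidiagonal and MF). Next, $\star (G_i/H)$ is a free product of countable discrete groups, and here is where \cite{HLSW} enters: that paper establishes that the reduced group $C^*$-algebra of a free product of groups whose reduced group $C^*$-algebras are MF is again MF (or at any rate the relevant case of free products of amenable, e.g.\ Abelian, groups). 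Each $G_i/H$ is a countable discrete Abelian group, so $C^*_r(G_i/H) = C^*(G_i/H) = C(\widehat{G_i/H})$ is a commutative, hence MF (even AF-embeddable), $C^*$-algebra; applying the result of \cite{HLSW} to the free product gives that $C^*_r(\star (G_i/H))$ is MF. Finally I would use that the MF property is closed under taking (minimal) tensor products with a nuclear — here even commutative — MF algebra, so that $C^*_r(\star (G_i/H)) \otimes L^\infty(\hat H)$ is MF. (One should be slightly careful about which tensor norm and about tensoring a $C^*$-algebra with a von Neumann algebra rather than a $C^*$-algebra, but since $L^\infty(\hat H)$ has separable predual and is a limit of finite-dimensional algebras in the weak sense, a microstate/matricial-field argument goes through: finite subsets of elementary tensors can be approximated in norm on matrices by combining matricial field approximations of each factor.)

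The main obstacle I expect is not conceptual but bookkeeping: making precise the sense in which tensoring with the von Neumann algebra $L^\infty(\hat H)$ preserves MF, since the definition of MF in \cite{BK} is stated for separable $C^*$-algebras and $L^\infty(\hat H)$ is typically non-separable as a $C^*$-algebra. The clean fix is to note that $C^*_r(\star_H G_i)$ is separable, so its image under the embedding of Theorem \ref{newmain} lies in a separable $C^*$-subalgebra of $C^*_r(\star (G_i/H)) \otimes L^\infty(\hat H)$; that separable subalgebra is generated by countably many elementary tensors, and one can arrange it to sit inside $C^*_r(\star(G_i/H)) \otimes B$ for a separable commutative $C^*$-subalgebra $B \subseteq L^\infty(\hat H)$ (generated by the relevant characters). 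Then $B$ is AF, hence MF, $C^*_r(\star(G_i/H))$ is MF by \cite{HLSW}, and the minimal tensor product of an MF algebra with a nuclear MF algebra is MF — this last fact is standard (combine block-diagonal matricial field approximations). Once the embedding target is replaced by this separable algebra, the subalgebra permanence of MF from \cite{BK} finishes the proof.

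Thus the ordering is: (1) each $C^*_r(G_i/H)$ is commutative, hence MF; (2) by \cite{HLSW}, $C^*_r(\star(G_i/H))$ is MF; (3) cut down the target of the embedding in Theorem \ref{newmain} to a separable subalgebra of the form $C^*_r(\star(G_i/H))\otimes B$ with $B$ separable commutative (hence AF and MF); (4) minimal tensor products of MF algebras, one nuclear, are MF, so this subalgebra is MF; (5) $C^*_r(\star_H G_i)$ embeds into it, and MF passes to subalgebras, so $C^*_r(\star_H G_i)$ is MF.
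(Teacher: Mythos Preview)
Your approach is essentially the paper's: embed via Theorem~\ref{newmain}, use \cite{HLSW} for the free-product factor, use that the abelian factor is nuclear and MF, and invoke closure of MF under tensoring with a nuclear MF algebra and under subalgebras (the paper cites Proposition~3.3.6 of \cite{BK} for the tensor step). Two small divergences are worth noting. First, the paper begins by reducing to finite $I$ via Lemma~\ref{Induct} and closure of MF under inductive limits, precisely because the result it quotes from \cite{HLSW} is stated for \emph{finite} reduced free products; you should either do the same reduction or observe that $C^*_r(\star(G_i/H))$ is the inductive limit of its finite sub-free-products. Second, the separability issue with $L^\infty(\hat H)$ that you flag and fix by cutting down to a separable commutative $B$ is handled in the paper simply by the blanket citation to \cite{BK}; your explicit cut-down is a clean way to make that step rigorous.
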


\begin{proof}
Since the class of MF algebras is stable under inductive limits it suffices to show the claim when $ I $ is finite by Lemma \ref{Induct}.
 
Suppose $ I $ is finite. Then
\begin{equation*}
C^*_r(\star (G_i/H))=\star_\C C^*_r(G_i/H),
\end{equation*}
where the right-hand side is the reduced free product with respect to the standard traces on the group $ C^* $-algebras, furthermore this right-hand side is MF by Theorem 3.3.3 of \cite{HLSW}. From Theorem \ref{newmain} we get an inclusion
\begin{equation*}
C^*_r(\star_{H} G_i) \subseteq C^*_r(\star (G_i/H)) \otimes L^\infty(\hat{H}).
\end{equation*}
It follows from this and Proposition 3.3.6 of \cite{BK} that $ C^*_r(\star_{H} G_i) $ is MF since abelian $ C^* $-algebras are nuclear and MF and $ C^* $-subalgebras of MF algebras are MF.
\end{proof}

As mentioned in the introduction the above result is related to the theory of $ C^* $-extensions via the following lemma.

\begin{lemma} \label{amenabilitet}
Let $ I $ be a possibly infinite set with $ |I| \geq 2 $, $ G_i $, $ i \in I $ be a collection of countable discrete groups with a common normal subgroup $ H $ of index greater than or equal to $ 2 $ in each $ G_i $. Then $ \star_H G_i $ is amenable if and only if $ I $ has cardinality $ 2 $ and $ H $ is amenable with index $ 2 $ in each $ G_i $.
\end{lemma}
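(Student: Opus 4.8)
The plan is to reduce the amalgamated free product to a situation where amenability is well understood. Observe first that since $H$ is normal in each $G_i$ with index at least $2$, the quotients $Q_i := G_i/H$ are nontrivial groups, and by Proposition \ref{uffe} amenability of $\star_H G_i$ forces amenability of $H$ (take $G = \star_H G_i$; $H$ is normal in this amalgamated free product, the quotient is $\star_{i} Q_i$, and if the big group is amenable then so is the subgroup $H$). So henceforth I may assume $H$ is amenable, and the question becomes: when is an extension of an amenable normal subgroup by $\star Q_i$ (the \emph{unamalgamated} free product of the $Q_i$) amenable? Since amenability is closed under extensions in both directions, $\star_H G_i$ is amenable if and only if $H$ is amenable \emph{and} $\star_H G_i / H = \star Q_i$ is amenable. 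So the whole statement collapses to the classical fact: a free product $\star_{i \in I} Q_i$ of nontrivial groups is amenable if and only if $|I| = 2$ and $|Q_1| = |Q_2| = 2$ (equivalently $\star Q_i$ is the infinite dihedral group $\Z/2 \star \Z/2$, which is amenable, being virtually cyclic).

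The key steps, in order, are: (i) record that each $Q_i = G_i/H$ is nontrivial, so $\star Q_i$ is a free product of at least two nontrivial groups; (ii) invoke Proposition \ref{uffe} to get amenability of $H$ as a necessary condition (alternatively just use that subgroups of amenable groups are amenable); (iii) use the exact sequence $1 \to H \to \star_H G_i \to \star Q_i \to 1$ together with closure of amenability under extensions to see that $\star_H G_i$ is amenable iff both $H$ and $\star Q_i$ are; (iv) prove the free-product dichotomy: $\star_{i \in I} Q_i$ with all $Q_i$ nontrivial is amenable iff $I = \{1,2\}$ and both factors have order $2$. For step (iv), the forward direction is the only substantial point — if $|I| \geq 3$, or $|I| = 2$ with some factor of order $\geq 3$, then $\star Q_i$ contains a copy of the free group $\F_2$ (a Ping-Pong / table-tennis lemma argument, e.g. in $\Z/2 \star \Z/3$ one finds $\F_2$, and $\Z/2 \star \Z/2 \star \Z/2$ likewise), hence is non-amenable; the reverse direction is immediate since $\Z/2 \star \Z/2$ is infinite dihedral, hence virtually $\Z$, hence amenable. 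Translating back: $|I| = 2$ and $[G_i : H] = 2$ for $i = 1, 2$.

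The main obstacle is step (iv), and more precisely the claim that $Q_1 \star Q_2$ contains a nonabelian free subgroup whenever it is not $\Z/2 \star \Z/2$. One has to be a little careful about the case distinctions: $|I| \geq 3$ with all factors possibly of order $2$; $|I| = 2$ with one factor of order $2$ and the other of order $\geq 3$; $|I| = 2$ with both factors of order $\geq 3$. In each case an explicit pair of elements generating a free subgroup must be exhibited and the Ping-Pong Lemma applied to the action of $\star Q_i$ on its Bass–Serre tree (or on the boundary); since the factors may be infinite, one should phrase this uniformly, e.g. pick $a \in Q_1 \setminus \{1\}$, $b \in Q_2 \setminus \{1\}$ and show $ab$ and a suitable conjugate (or, when a factor has order $\geq 3$, two noncommuting elements of that factor conjugated across) play ping-pong on the sets of reduced words starting with a given syllable. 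This is standard but needs to be stated cleanly. Everything else — the extension argument, the appeal to Proposition \ref{uffe}, the bookkeeping with indices — is routine.
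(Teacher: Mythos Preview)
Your proposal is correct and structurally identical to the paper's proof: both reduce via the short exact sequence $1 \to H \to \star_H G_i \to \star_i Q_i \to 1$ (using closure of amenability under extensions and subgroups) to the classical dichotomy for free products of nontrivial groups, and both identify $\Z/2 \star \Z/2$ as the sole amenable case. The only real difference is in how the non-amenability of $\star_i Q_i$ is established in the non-dihedral cases. The paper does an explicit case analysis, listing particular subgroups ($\Z_{p_1}\star\Z_{p_2}\star\Z_{p_3}$, $\Z\star\Z_{p}$, $\Z_{p_1}\star\Z_{p_2}$, $\Z_2\star(\Z_2\oplus\Z_2)$, $\mathbb{F}_2$) according to whether factors are finite or infinite and whether indices exceed $2$, and then cites Pier's book or exhibits an explicit $\mathbb{F}_2$ inside $\Z_2\star(\Z_2\oplus\Z_2)$. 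Your proposed uniform Ping-Pong / Bass--Serre tree argument is cleaner and avoids the ad hoc case split, at the cost of having to actually write out the Ping-Pong carefully; the paper's approach trades conceptual uniformity for the ability to cite known results.

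One small remark: invoking Proposition~\ref{uffe} for step~(ii) is a red herring --- that proposition is about when the quotient map extends on the reduced $C^*$-algebra level, not about deducing amenability of $H$ from amenability of $\star_H G_i$. Your parenthetical alternative (subgroups of amenable groups are amenable) is the right justification, and is exactly what the paper uses.
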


\begin{proof}
If $ I $ has cardinality $|I| = 2 $, $ H $ is amenable and has index $ 2 $ in both groups $ G_i $, we see that $ \star_H G_i $ is an extension of amenable groups whence $\star_H G_i  $ is amenable.

Obviously amenability of $ \star_H G_i  $ forces $ H $ to be amenable too.

If the cardinality of $ I $ is greater than or equal to $  3$ then $ \star (G_i/H) $ contains a subgroup isomorphic to one of the following groups
\begin{equation*}
 \Z_{p_1} \star \Z_{p_2} \star \Z_{p_3}, \  \Z \star \Z_{p_1}, \  \mathbb{F}_2,
 \end{equation*}
where each integer $ p_i \geq 2 $.

Each of these groups is non-amenable. The first one by Proposition 14.2 of \cite{P}, the second one has the non-amenable group (again by Proposition 14.2 \cite{P}) $ \Z_3 \star \Z_{p_1} $ as homomorphic image and hence cannot itself be amenable. Finally everyone knows that $ \mathbb{F}_2 $ is not amenable. From this it follows that $ \star (G_i/H) $ and hence $ \star_H G_i $ is not amenable when we are dealing with $ 3 $ or more groups.

If $ |I| = 2 $, and at least one of the indices is strictly greater than $ 2 $, $ \star (G_i/H) $ contains a subgroup isomomorphic to one of the following
\begin{equation*}
\mathbb{F}_2 , \  \Z_{p_1} \star \Z, \  \Z_{p_1} \star \Z_{p_2}, \  \Z_2 \star (\Z_2 \oplus \Z_2)=\langle a,b,c \mid a^2=b^2=c^2= cbc^{-1}b^{-1}=1 \rangle , 
\end{equation*}
where $ p_1 \geq 2 $ and $ p_2 \geq 3 $.

We have already noted that the first three groups are non-amenable. The last group is non-amenable because the subgroup generated by the elements $ abab $ and $ acac $ is isomomorphic to $ \mathbb{F}_2 $. We leave the tedious but straightforward argument to the reader. This completes the proof.
\end{proof}

\begin{cor}
Let $ I $ be a countable or finite set with $ |I| \geq 2 $, $ G_i $, $ i \in I $ a collection of countable, discrete Abelian groups with a common subgroup $ H $ which has index greater than or equal to $  2 $ in each $ G_i $. Then the BDF semigroup of extensions by $ \mathbb{K} $, $ \Ext(C_r^*(\star_H G_i)) $, is a group if and only if $2= |I|=|G_i/H| $ for both $ i \in I $.
\end{cor}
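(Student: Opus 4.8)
The plan is to reduce the statement to the amenability dichotomy for $\star_H G_i$ established in Lemma~\ref{amenabilitet}, and then to invoke the MF property together with two standard facts about extensions.

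First I would record that, because each $G_i$ is Abelian, the common subgroup $H$ is automatically normal in every $G_i$ and is itself amenable. Hence Lemma~\ref{amenabilitet} applies verbatim and shows that $G := \star_H G_i$ is amenable precisely when $|I| = 2$ and $[G_i : H] = 2$ for both $i$, i.e. exactly when $2 = |I| = |G_i/H|$ for both $i \in I$. Thus the corollary is equivalent to the assertion that $\Ext(C_r^*(G))$ is a group if and only if $G$ is amenable, and it suffices to prove this equivalence.

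For the direction ``$G$ amenable $\Rightarrow$ $\Ext(C_r^*(G))$ is a group'', I would note that amenability gives $C_r^*(G) = C^*(G)$, which is nuclear; moreover $G$ is countable, being an amalgamated free product of countably many countable groups, so $C_r^*(G)$ is separable. The Choi--Effros lifting theorem then makes every unital extension of $C_r^*(G)$ by $\mathbb{K}$ semisplit, hence invertible in the sense of Brown, Douglas and Fillmore, so $\Ext(C_r^*(G))$ is a group. For the converse I would argue by contraposition: if the numerical condition fails, then by the first step $G$ is not amenable, so by Rosenberg's theorem (Theorem V.4.2.13 of \cite{B}) $C_r^*(G)$ is not quasidiagonal; on the other hand Theorem~\ref{MF eksempler} gives that $C_r^*(G)$ is MF. By the standard fact recalled in the Preliminaries, an MF algebra that is not quasidiagonal admits an extension by $\mathbb{K}$ that is not invertible, so $\Ext(C_r^*(G))$ is not a group.

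I do not expect a genuine obstacle here: the substantial work has already been carried out in Theorem~\ref{MF eksempler} and Lemma~\ref{amenabilitet}, and the two remaining ingredients — that nuclearity of a separable $C^*$-algebra forces $\Ext$ to be a group, and Rosenberg's quasidiagonality criterion — are classical. The only point that requires a little care is checking that the hypotheses of Lemma~\ref{amenabilitet} are met, in particular that normality and amenability of $H$ come for free from the Abelian assumption, and then matching the resulting amenability criterion with the stated condition $2 = |I| = |G_i/H|$.
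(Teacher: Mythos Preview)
Your proposal is correct and follows exactly the route the paper takes: combine Lemma~\ref{amenabilitet} (noting $H$ is normal and amenable since the $G_i$ are Abelian) with Theorem~\ref{MF eksempler}, then use nuclearity $\Rightarrow$ $\Ext$ is a group for one direction and MF $+$ non-quasidiagonal (via Rosenberg) $\Rightarrow$ $\Ext$ not a group for the other. The paper's proof is a one-line reference to these ingredients, and you have simply unpacked it.
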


\begin{proof}
Combine Theorem \ref{MF eksempler} and Lemma \ref{amenabilitet} with the fact that $ \Ext(C_r^*(\star_H G_i)) $ is a group if $ C_r^*(\star_H G_i) $ is nuclear, see, e.g., \cite{Ar}.
\end{proof}

The proof of Theorem \ref{newmain} has some von Neumann algebra flavour to it. Indeed, in the proof we use a measurable section of a certain surjective map. In the example following the proof this corresponds to taking $ k $'th roots in $ \C $ which cannot possibly be done continuously.

Exploring this von Neumann aspect we get an isomorphism on the level of von Neumann algebras. A result which may be known to experts. The precise statement is as follows.

\begin{theorem} \label{Hovedresultat}
Let $ G_i $, $ i \in I $ be a finite or countably infinite collection of countable discrete Abelian groups with a common subgroup $ H $. Set $ \star_H G_i = G $. Then we have an isomorphism of von Neumann algebras
\begin{equation*}
L(G) \xrightarrow{\sim} L(\star (G_i/H)) \bar{\otimes} L^\infty(\hat{H}),
\end{equation*}
where the tensor product is the spatial tensor product of von Neumann algebras.
\end{theorem}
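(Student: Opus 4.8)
The plan is to build on the construction in the proof of Theorem~\ref{newmain}, which already supplies the raw material: a representation $\sigma$ of $G$ by unitaries of $L^\infty(\widehat H)$ (restricting on each $G_i$ to $\sigma_i$), the quotient $*$-homomorphism $\rho\colon C^*_r(G)\to C^*_r(\star(G_i/H))$ of Proposition~\ref{uffe}, and the $*$-homomorphism $\psi$ with $\psi(g)=\rho(g)\otimes\sigma(g)$ for $g\in G$. The idea is to realise $\psi$ spatially by an honest unitary on $\ell^2(G)$, so that conjugation by that unitary is automatically a normal $*$-isomorphism of $L(G)$ onto the weak closure of $\psi(\C[G])$, and then to identify that weak closure with the full tensor product.

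Write $\overline G:=\star(G_i/H)=G/H$ (the amalgamated subgroup is in fact central in $G$, so the quotient makes sense), and let $L(\overline G)\,\bar\otimes\,L^\infty(\widehat H)$ act on $\ell^2(\overline G)\otimes L^2(\widehat H,m)$ in the usual way. Define $V\colon\ell^2(G)\to\ell^2(\overline G)\otimes L^2(\widehat H)$ by $V\delta_g=\delta_{gH}\otimes\sigma(g)$, where $\sigma(g)\in L^\infty(\widehat H)\subseteq L^2(\widehat H)$ is unimodular, so that $\|V\delta_g\|=1$. If $gH\ne g'H$ the images are orthogonal; if $g'=gh$ with $h\in H\setminus\{1\}$ then $\sigma(g')=\sigma(g)\chi_h$, where $\chi_h(\omega):=\omega(h)$, and $\langle V\delta_g,V\delta_{g'}\rangle=\int_{\widehat H}\overline{\chi_h}\,dm=0$ by orthogonality of the characters of the compact group $\widehat H$. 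So $V$ is an isometry, and it is onto because for each fixed coset the translates $\{\sigma(g_0)\chi_h:h\in H\}$ are the image, under the unitary multiplication operator by $\sigma(g_0)$, of the orthonormal basis $\{\chi_h:h\in H\}$ of $L^2(\widehat H,m)$ (Pontryagin duality identifies the dual of $\widehat H$ with $H$ via $h\mapsto\chi_h$), hence again an orthonormal basis. Thus $V$ is unitary, and a short computation on the orthonormal basis $\{\delta_{gH}\otimes\sigma(g)\}_{g\in G}$ gives $V\lambda_G(g)V^{*}=\lambda_{\overline G}(gH)\otimes\sigma(g)=\psi(g)$ for every $g$, so $\Ad(V)$ restricts to a normal $*$-isomorphism of $L(G)=\lambda_G(G)''$ onto $\psi(\C[G])''\subseteq L(\overline G)\,\bar\otimes\,L^\infty(\widehat H)$.

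It then remains to check $\psi(\C[G])''=L(\overline G)\,\bar\otimes\,L^\infty(\widehat H)$. For $h\in H$ one has $\rho(h)=1$ and $\sigma(h)=\chi_h$, so $\psi(h)=1\otimes\chi_h$; the functions $\chi_h$ separate the points of $\widehat H$, contain the constants, and are closed under products and conjugation, so by Stone--Weierstrass they span a dense $*$-subalgebra of $C(\widehat H)$, which---as $m$ has full support---is weak-$*$ dense in $L^\infty(\widehat H)$. Hence $1\otimes L^\infty(\widehat H)\subseteq\psi(\C[G])''$. For $g\in G_i$ we have $\psi(g)=\rho(g)\otimes\sigma_i(g)$ with $\sigma_i(g)$ a unitary of $L^\infty(\widehat H)$, so $\psi(g)\bigl(1\otimes\sigma_i(g)^{*}\bigr)=\rho(g)\otimes1\in\psi(\C[G])''$; the $\rho(g)$ for $g\in\bigcup_iG_i$ are the canonical generating unitaries $\lambda_{\overline G}(gH)$ of $L(\overline G)$, so $L(\overline G)\otimes1\subseteq\psi(\C[G])''$. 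Since the von Neumann algebra generated by $L(\overline G)\otimes1$ and $1\otimes L^\infty(\widehat H)$ is, by definition of the spatial tensor product, the whole of $L(\overline G)\,\bar\otimes\,L^\infty(\widehat H)$, this would finish the proof.

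The embedding itself is free from Theorem~\ref{newmain}, so the real content here is the surjectivity, and the step I expect to require the most care is the assertion that $\{\chi_h:h\in H\}$ is an \emph{orthonormal basis} of $L^2(\widehat H,m)$, not merely an orthonormal set: this is precisely what upgrades $V$ from an isometry to a unitary, and hence $\Ad(V)$ from a normal embedding to a $*$-isomorphism onto its range, so that no separate normality or commutation argument is needed. This is classical Fourier analysis on the compact abelian group $\widehat H$---Pontryagin duality together with completeness of the characters---and the countability of $H$, hence the metrizability of $\widehat H$ from Lemma~\ref{metrizable}, keeps everything in the separable setting. Once $V$ is known to be unitary, the ``untwisting'' identity $\psi(g)(1\otimes\sigma_i(g)^{*})=\rho(g)\otimes1$ makes recovering $L(\overline G)\otimes1$, and with it the whole tensor product, routine.
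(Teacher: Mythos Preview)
Your argument is correct, and the surjectivity half is essentially identical to the paper's: both show first that $1\otimes L^\infty(\widehat H)$ lies in $\psi(\C[G])''$ via Stone--Weierstrass on the characters $\chi_h$, and then untwist $\psi(g)$ by the unitary $1\otimes\sigma(g)^{*}$ to recover the generators $\lambda_{\overline G}(gH)\otimes 1$.

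Where you differ is in how you obtain the normal $*$-homomorphism on $L(G)$ in the first place. The paper simply asserts that the $C^*$-level map $\psi$ of Theorem~\ref{newmain} ``extends to a normal $*$-homomorphism'' and then checks injectivity by the same trace argument as before. You instead build the spatial unitary $V\colon\ell^2(G)\to\ell^2(\overline G)\otimes L^2(\widehat H)$, $V\delta_g=\delta_{gH}\otimes\sigma(g)$, verify that it is unitary using Plancherel on $\widehat H$, and observe that $\Ad(V)$ intertwines $\lambda_G$ with $\psi$. This buys you normality and injectivity for free, with no separate appeal to trace-preservation or to a von Neumann extension lemma; in effect you are supplying exactly the justification that the paper leaves implicit. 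Your observation that $H$ is central in $G$ (because each $G_i$ is abelian), so that $G/H$ really is the free product $\star(G_i/H)$, is also a point the paper uses but does not spell out.
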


\begin{proof}
The injective $ * $-homomorphism from Theorem \ref{newmain} extends to a normal $ * $-homomorphism $ \psi:L(G) \to L(\star (G_i/H)) \bar{\otimes} L^\infty(\hat{H}) $ on the von Neumann algebra level. 

Injectivity of $ \psi $ follows in exactly the same way as it did in the proof of Theorem \ref{newmain} since the trace is also faithful on the von Neumann algebra level.

For surjectivity of $ \psi $ note first that the algebra $ 1 \otimes L^\infty(\hat{H}) $ is in the image of $ \psi $. Indeed, combining the classical theorems of Stone-Weierstrass and Gelfand-Raikov, we see that the algebra generated by $ \psi(H) $ is norm dense in $1 \otimes C(\hat{H}) $ which in turn is strongly dense in $1 \otimes L^\infty(\hat{H}) \subseteq  B(\ell^2(\star (G_i/H)) \otimes L^2(\hat{H})) $. On the other hand, for any $ b \in \star (G_i/H) \subseteq L(\star (G_i/H)) $ there is a unitary $ c \in L^\infty(\hat{H}) $ such that $ b \otimes c $ is in the image of $ \psi $. It follows that the image of $ \psi $ contains anything of the form $ a \otimes b $ where $ a \in L(\star (G_i/H)) $ and $ b \in L^\infty(\hat{H}) $ and thus $ \psi $ is surjective.
\end{proof}

\section{More Examples of MF Algebras}

The ideas from the last section can be used to give some new examples of reduced group $ C^* $-algebras with the MF property.

The simple strategy above is basically reducing the MF question of amalgamated free products of groups to the same question with no amalgamation. More precisely given a collection of discrete groups $ (G_i) $ with a common normal subgroup $ H $ the idea is to find a nuclear algebra with the MF property (i.e., an NF algebra) $ A $ such that
\begin{equation*}
C^*_r( \star_H G_i) \subseteq C^*_r(\star (G_i/H)) \otimes A,
\end{equation*}
and then from this deduce that the left-hand side is MF if $  C^*_r(\star (G_i/H)) $ is.

Executing the strategy, of course, requires a candidate for the algebra $ A $ and in the same breath a candidate for the map realizing the inclusion. This is where the real work lies. In the following we will discuss this line of attack in the case of a tower of groups.

\begin{prop} \label{tower}
Suppose $ H \subseteq G_1 \subseteq G_2 \subseteq \dots \subseteq G_n $ are discrete groups with $ H $ amenable and normal in each $ G_i $. Then we have an inclusion of $ C^* $-algebras
\begin{equation*}
C^*_r( \star_H G_i) \subseteq C^*_r(\star (G_i/H)) \otimes_{\min} C^*_r(G_n).
\end{equation*}
\end{prop}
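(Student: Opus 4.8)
The plan is to mimic the construction in the proof of Theorem~\ref{newmain}, replacing the commutative algebra $L^\infty(\hat H)$ by the (possibly noncommutative) algebra $C^*_r(G_n)$ and the measurable character-extension maps $e_i$ by honest group homomorphisms coming from the inclusions $G_i \hookrightarrow G_n$. First, since $H$ is amenable and normal in each $G_i$ (and hence in $G_n$, and in $G = \star_H G_i$, because normality of $H$ in each factor forces normality in the amalgamated product), Proposition~\ref{uffe} gives a unital $*$-homomorphism $\rho : C^*_r(G) \to C^*_r(G/H) = C^*_r(\star (G_i/H))$. This is the first tensor leg, and it comes from a unitary representation of $G$ weakly contained in the left regular representation $\lambda_{G/H}$, which is in turn weakly contained in $\lambda_G$.

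For the second leg, I would use the inclusion homomorphisms $\iota_i : G_i \hookrightarrow G_n$ followed by the left regular representation $\lambda_{G_n}$; that is, set $\sigma_i = \lambda_{G_n} \circ \iota_i$, a unitary representation of $G_i$ on $\ell^2(G_n)$. Since all the $\iota_i$ agree on $H$, these representations agree on $H$, so by the universal property of the amalgamated free product they assemble into a unitary representation $\sigma$ of $G = \star_H G_i$ on $\ell^2(G_n)$. The point is that $\sigma$ is weakly contained in $\lambda_G$: each factor representation $\sigma_i$ is $\lambda_{G_n}|_{G_i}$, which by the standard fact on restriction of the regular representation to a subgroup is a multiple of $\lambda_{G_i}$, hence $\sigma_i \prec \lambda_{G_i} \prec \lambda_G$; and weak containment of a family of representations agreeing on the amalgam passes to weak containment of the induced representation in $\lambda_G$. (If one prefers, one can invoke the known fact that the amalgamated product of representations weakly contained in the respective regular representations is weakly contained in the regular representation of the amalgam, or argue directly as in Lemma~\ref{Fell2}.)

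Now form the tensor product representation of the unitary representation behind $\rho$ with $\sigma$. By Lemma~\ref{Fell2}, tensoring a representation weakly contained in $\lambda_G$ with any unitary representation of $G$ stays weakly contained in $\lambda_G$; applying this to $\rho \otimes \sigma$ we obtain a $*$-homomorphism
\begin{equation*}
\psi : C^*_r(G) \longrightarrow C^*_r(\star (G_i/H)) \otimes_{\min} C^*_r(G_n),
\end{equation*}
since the range of the minimal tensor product of two representations weakly contained in regular representations lands in the minimal tensor product of the reduced group $C^*$-algebras.

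Finally, injectivity of $\psi$ is shown exactly as in Theorem~\ref{newmain}, by pulling back the tensor product $\tau \otimes \tau_n$ of the canonical traces on the two factors and checking it equals the canonical faithful trace on $C^*_r(G)$; for $g \in G \setminus \{1\}$ one computes $\tau \otimes \tau_n(\psi(g)) = \tau(\rho(g))\,\tau_n(\sigma(g))$, which vanishes if $\rho(g) \neq 1$ because $\tau$ is faithful, and if $\rho(g) = 1$ then $g \in H \setminus \{1\}$, so $\sigma(g) = \lambda_{G_n}(g) \neq 1$ with $\tau_n(\lambda_{G_n}(g)) = 0$. A trace that agrees with the canonical faithful trace forces $\psi$ to be injective. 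I expect the main obstacle to be the weak-containment bookkeeping for $\sigma$: one must be careful that the amalgamated free product of the $\sigma_i$ is genuinely weakly contained in $\lambda_G$ — this is the place where normality of $H$ (needed already for $\rho$) and the restriction-of-regular-representation fact are both used — and that the resulting $\psi$ indeed maps into the minimal tensor product rather than some larger completion.
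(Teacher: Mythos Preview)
Your proposal is correct and follows exactly the paper's own argument: assemble the inclusions $G_i \hookrightarrow G_n$ into a group homomorphism $\star_H G_i \to G_n$, tensor the resulting representation $\sigma$ with the quotient representation $\rho$ coming from Proposition~\ref{uffe}, invoke Lemma~\ref{Fell2}, and check injectivity via the canonical trace. The one detour you can drop is the paragraph arguing that $\sigma$ itself is weakly contained in $\lambda_G$ --- Lemma~\ref{Fell2} needs only $\rho \prec \lambda_G$ and places no hypothesis on the second tensor factor, so the ``main obstacle'' you flag simply does not arise (and likewise the image lands in the minimal tensor product automatically, since $\rho$ and $\sigma$ take values in $C^*_r(\star(G_i/H))$ and $C^*_r(G_n)$ on the nose).
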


\begin{proof}
The proof is very similar to that of Theorem \ref{newmain} so we will be brief. Note that the free product of the group inclusions gives a map $ \star_H G_i \to G_n \subseteq C^*_r(G_n) $. This representation tensored with the representation $ \star_H G_i \to (\star_H G_i)/H = \star (G_i/H) \subseteq C^*_r(\star (G_i/H)) $ is weakly contained in the left regular representation (Proposition \ref{uffe} and Lemma \ref{Fell2}) and so gives a map between the $ C^* $-algebras in question. The injectivity of this map is established by considering the trace once again.
\end{proof}

To use this observation to get new examples of MF algebras, we need to know that each $ C^*_r(G_i/H) $ is an ASH algebra so that we can invoke \cite{HLSW} and we need to know that $ C^*_r(G_n) $ is NF. The last condition is related to the well known (and somewhat notorious) conjecture of Rosenberg stating that $ C^*_r(G) $ is quasidiagonal for any countable discrete amenable group $ G $. Indeed, if $ C^*_r(G_n) $ is NF it must be quasidiagonal by Voiculescu's characterization of quasidiagonality and the Choi-Effros Lifting Theorem and $ G_n $ must be amenable.

In other words to get concrete examples, $ G_n $ must be an amenable group for which Rosenberg's conjecture holds. Unfortunately Rosenberg's conjecture has not, to the knowledge of the author, been established in very many cases. Obvious classes for which the conjecture holds are the abelian and the finite groups. Another concrete example based on what we have done so far is the following.

\begin{example}
Let $ S_\infty $ be the group of permutations on $ \N $ with finite support. It has a normal subgroup $ A_\infty $ consisting of the finitely supported even permutations. By Lemma \ref{amenabilitet} the group $ S_\infty \star_{A_\infty} S_\infty $ is amenable and clearly $ C^*(S_\infty) $ is NF (it is in fact AF), so by Proposition \ref{tower} $ S_\infty \star_{A_\infty} S_\infty $ has a group $ C^* $-algebra which is MF and the discussion above tells us that $ S_\infty \star_{A_\infty} S_\infty $ satisfies Rosenberg's conjecture. Note that we do not have to invoke the results of \cite{HLSW} since the quotient $ S_\infty \star_{A_\infty} S_\infty/A_\infty $ is the infinite dihedral group which is amenable and residually finite and hence has a group $ C^* $-algebra which is MF. See details below.
\end{example}

We will call a group MF if its reduced group $ C^* $-algebra is MF. Using standard terminology from group theory a group $ G $ is then residually MF if there are MF groups $ G_n $, $ n \in \N $, such that $ G \subseteq \prod_n G_n $.

\begin{prop} \label{embedding}
Suppose $ G $ is a discrete group and that there is a sequence of discrete groups $ (G_n)_{n \in \N} $ and surjective homomorphisms $ \varphi_n : G \to G_n $ satisfying that $ \ker \varphi_n $ is amenable for each $ n $ and that
\begin{equation*}
\bigcap_{n \in I} \ker \varphi_n = \{1\},
\end{equation*}
whenever $ I  $ is an infinite subset of $ \N $.

It follows that we have an embedding 
\begin{equation*}
 \pi: C^*_r(G) \to \prod_{n=1}^\infty C^*_r(G_{n}) \Big /    \sum_{n=1}^\infty C^*_r(G_{n}). 
\end{equation*}
\end{prop}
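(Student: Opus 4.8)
The plan is to build $\pi$ directly out of the $*$-homomorphisms supplied by Proposition~\ref{uffe} and to verify injectivity, once more, by a trace computation. For each $n$ the subgroup $\ker\varphi_n$ is normal and amenable, and $\varphi_n$ induces an isomorphism $G/\ker\varphi_n\cong G_n$; hence Proposition~\ref{uffe}, applied to $\ker\varphi_n\trianglelefteq G$, furnishes a unital $*$-homomorphism $C^*_r(G)\to C^*_r(G/\ker\varphi_n)\cong C^*_r(G_n)$, which I call $\pi_n$. Each $\pi_n$ is contractive, so $\Pi:=(\pi_n)_{n}$ is a unital $*$-homomorphism $C^*_r(G)\to\prod_{n=1}^\infty C^*_r(G_n)$; composing with the quotient map $q$ onto $\prod_{n=1}^\infty C^*_r(G_n)\big/\sum_{n=1}^\infty C^*_r(G_n)$ yields the desired $\pi:=q\circ\Pi$. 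It remains to prove that $\pi$ is injective.

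The crux is a purely combinatorial observation: for every $g\in G\setminus\{1\}$ the set $\{n\in\N:g\in\ker\varphi_n\}$ is \emph{finite}. Indeed, if this set $I$ were infinite then the hypothesis would force $g\in\bigcap_{n\in I}\ker\varphi_n=\{1\}$, a contradiction. Now let $\tau_n$ be the canonical faithful trace on $C^*_r(G_n)$ and $\tau_G$ the canonical faithful trace on $C^*_r(G)$, and put $\phi_n:=\tau_n\circ\pi_n$, a state on $C^*_r(G)$. On a group element $g$ one has $\phi_n(g)=\tau_n(\varphi_n(g))$, which is $1$ if $g\in\ker\varphi_n$ and $0$ otherwise. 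By the observation, for any finitely supported $b=\sum_g c_g g\in\C[G]$ the number $\phi_n(b)$ is eventually equal to $c_1=\tau_G(b)$; a routine $\varepsilon/3$ argument (using $\|\phi_n\|=\|\tau_G\|=1$ and the density of $\C[G]$ in $C^*_r(G)$) then upgrades this to $\lim_{n\to\infty}\phi_n(a)=\tau_G(a)$ for every $a\in C^*_r(G)$.

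Injectivity follows immediately: if $\pi(a)=0$ then $\Pi(a^*a)\in\sum_{n=1}^\infty C^*_r(G_n)$, i.e.\ $\|\pi_n(a^*a)\|\to 0$, hence $\phi_n(a^*a)=\tau_n(\pi_n(a^*a))\to 0$; combining this with the previous paragraph gives $\tau_G(a^*a)=0$, so $a=0$ by faithfulness of $\tau_G$.

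I expect the only genuinely delicate point to be the correct exploitation of the hypothesis: it is precisely the ``all infinite intersections of kernels are trivial'' condition (as opposed to the weaker $\bigcap_{n\in\N}\ker\varphi_n=\{1\}$) that puts each $g\neq1$ in only finitely many kernels, which is what makes $\phi_n\to\tau_G$ an honest limit rather than merely a limit along an ultrafilter. Everything else is bookkeeping with contractive maps. Alternatively one could phrase the argument through a limit trace $\tau_\omega=\lim_{n\to\omega}\tau_n$ along a free ultrafilter $\omega$ on $\N$, defined directly on the quotient algebra; but the honest limit available here makes the ultrafilter unnecessary.
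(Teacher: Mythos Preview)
Your proof is correct and follows the same overall strategy as the paper: construct $\pi$ from the maps supplied by Proposition~\ref{uffe} and verify injectivity by showing that the canonical trace $\tau_G$ factors through $\pi$. The one difference is in the packaging of the trace step. The paper fixes a character $\omega$ on $\ell^\infty(\N)$ that is not a point evaluation, uses it to define a trace $\tau((x_n))=\omega((\tau_n(x_n)))$ that descends to the quotient, and then argues that $\tau\circ\pi=\tau_G$ by noting (contrapositively) that if $\omega((\tau_n(\varphi_n(g))))=1$ then $g$ lies in infinitely many kernels, hence $g=1$. You instead extract the same combinatorial content directly --- each $g\neq 1$ lies in only finitely many $\ker\varphi_n$ --- and use it to show that $\phi_n=\tau_n\circ\pi_n$ converges to $\tau_G$ as an honest limit, then finish with $\|\pi_n(a^*a)\|\to 0\Rightarrow\tau_G(a^*a)=0$. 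Your route is slightly more elementary (no free ultrafilter needed), and your closing paragraph already anticipates the paper's variant exactly; the two arguments are equivalent in content.
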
 

\begin{proof}
Lemma \ref{uffe} ensures the existence of the $ * $-homomorphism $ \pi $. As usual the injectivity follows by considering the trace.

Fix a character $ \omega $ on $ \ell^\infty(\N) $ such that $ \omega $ is not evaluation at any point $ n \in \N $. Note that $ \ker(\omega) $ contains any sequence that is eventually $ 0 $ and so by continuity all sequences converging to $ 0 $. In fact if $ x \in \ell^\infty(\N) $ is a sequence of $ 0 $'s and $ 1 $'s, $ \omega (x) = \omega (x)^2 $, so $  \omega (x) $ is either $ 1 $ or $ 0 $. In particular if $x= 1_{\{n\}} $ and $ \omega(x)= 1 $ then by Urysohn's Lemma there is an $ 1 \geq f \geq 0 $ in $ \ell^\infty(\N) $ with $  f(n) =0$ and $ \omega(f) > 0 $ since $ \omega $ is not evaluation at $ n $. Then 
\begin{equation*}
 \| f + 1_{\{ n \}} \| \geq \omega(f + 1_{\{ n \}})> 1
\end{equation*}
but the norm equals $ 1 $. It follows that $ \omega (1_{\{ n \}})=0 $ for all $ n \in \N $.

Let $ \tau_n $ denote the standard trace on $ C_r^*(G_{n}) $. The equation
\begin{equation*}
\tau((x_n)) =\omega( (\tau_n(x_n)))
\end{equation*}
defines a trace on $ \prod_{n=1}^\infty C^*_r(G_{n}) $ which drops to
\begin{equation*}
\prod_{n=1}^\infty C^*_r(G_{n}) \Big /    \sum_{n=1}^\infty C^*_r(G_{n})
\end{equation*}
since the ideal is killed by $ \tau $. 

We consider the trace $ \tau \circ \pi$ on $ C^*_r(G) $. Let $ g \in G $. Then
\begin{equation} \label{01}
\tau \circ \pi (g) = \tau ( (\varphi_n(g))) = \omega (\tau_n (\varphi_n(g))),
\end{equation}
where $ \varphi_n $ is the quotient map $ G \to G_n $.

Since $ (\tau_n (\varphi_n(g))) $ is a sequence of $ 0 $'s and $ 1 $'s \eqref{01} equals $ 0 $ or $ 1 $ by the reasoning above. If \eqref{01} is $ 1 $ then $ \tau_n (\varphi_n(g)) =1 $ for infinitely many $ n $ which in turn implies that $ \varphi_n(g) =1 $ for infinitely many $ n $ and so $ g =1 $ by assumption. In short
\begin{equation*}
\tau \circ \pi (g) = \begin{cases} 1 & \text{ if } g =1 \\
0 & \text{ if } g \neq 1 \end{cases}.
\end{equation*}
It follows that $ \tau \circ \pi $ is exactly the usual faithful trace on $ C^*_r(G) $ and so $ \pi $ is injective.
\end{proof}

\begin{cor}
Suppose $ G $ is an amenable residually MF group. It follows that $ G $ is MF and hence satisfies Rosenberg's conjecture.
\end{cor}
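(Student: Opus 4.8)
The plan is to reduce everything to a single application of Proposition~\ref{embedding}; the two points that need attention are (i) arranging that the kernels of the quotient maps satisfy the infinite--intersection hypothesis of that proposition, and (ii) making sure that the codomain of the resulting embedding is MF, and it is precisely in (ii) that amenability of $G$ will be used.

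First I would set things up. By definition of residual MF-ness, fix MF groups $G_n$, $n\in\N$, with $G\subseteq\prod_n G_n$, and let $\varphi_n\colon G\to G_n$ be the coordinate maps. Replacing $G_n$ by $\varphi_n(G)$ we may assume each $\varphi_n$ is onto; this does no harm, since $C^*_r(\varphi_n(G))$ is a $C^*$-subalgebra of $C^*_r(G_n)$ and the MF property passes to subalgebras, so $\varphi_n(G)$ is again MF. Now amenability of $G$ buys two things at once: each $\ker\varphi_n$ is amenable (a subgroup of an amenable group), and each $G_n=\varphi_n(G)=G/\ker\varphi_n$ is amenable (a quotient of an amenable group), so $C^*_r(G_n)=C^*(G_n)$ is \emph{nuclear} as well as MF, i.e.\ NF.

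Next I would pass to partial products to force the intersection hypothesis. Put $P_n:=G_1\times\dots\times G_n$, let $\psi_n\colon G\to P_n$ be $g\mapsto(\varphi_1(g),\dots,\varphi_n(g))$, and set $H_n:=\psi_n(G)\subseteq P_n$. Then $\ker\psi_n=\bigcap_{k\le n}\ker\varphi_k$ is a \emph{decreasing} sequence of amenable subgroups, and $\bigcap_n\ker\psi_n=\bigcap_k\ker\varphi_k=\{1\}$ because the family $(\varphi_k)_k$ separates points of $G$; since the $\ker\psi_n$ are nested this gives $\bigcap_{n\in I}\ker\psi_n=\{1\}$ for \emph{every} infinite $I\subseteq\N$. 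Moreover $C^*_r(P_n)\cong C^*_r(G_1)\otimes_{\min}\dots\otimes_{\min}C^*_r(G_n)$ is NF, being a finite minimal tensor product of NF algebras (Proposition 3.3.6 of \cite{BK} together with the stability of nuclearity under minimal tensor products), so its $C^*$-subalgebra $C^*_r(H_n)$ is MF. Now the surjections $\psi_n\colon G\twoheadrightarrow H_n$ have amenable kernels and satisfy the intersection hypothesis, so Proposition~\ref{embedding} yields an embedding
\begin{equation*}
C^*_r(G)\hookrightarrow\prod_{n=1}^\infty C^*_r(H_n)\Big/\sum_{n=1}^\infty C^*_r(H_n).
\end{equation*}
Since each $C^*_r(H_n)$ is MF, every separable $C^*$-subalgebra of the right-hand side is MF by \cite{BK} (the class of MF algebras being closed under this kind of sequential reduced product), hence $C^*_r(G)$ is MF, i.e.\ $G$ is MF. Finally, $G$ being amenable, $C^*_r(G)=C^*(G)$ is nuclear, so the nuclear MF algebra $C^*_r(G)$ is NF and therefore quasidiagonal, which is exactly the content of Rosenberg's conjecture for the amenable group $G$.

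The main obstacle is not the bookkeeping but the passage in the previous paragraph from ``residually MF'' to a family of quotient maps to which Proposition~\ref{embedding} actually applies: one needs decreasing kernels (to make the infinite--intersection hypothesis automatic) \emph{while keeping the corresponding group $C^*$-algebras MF}, and this is where amenability of $G$ is genuinely exploited --- it forces the quotients $\varphi_n(G)$ to be amenable, hence their group $C^*$-algebras to be NF, so that the finite tensor products $C^*_r(P_n)$ remain NF. A secondary point, to be cited from \cite{BK} rather than re-proved here, is the stability of the MF property under the passage to a sequential reduced product $\prod_n A_n/\sum_n A_n$ of MF algebras (for separable subalgebras).
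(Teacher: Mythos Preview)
Your proposal is correct and follows essentially the same route as the paper: replace the $G_n$ by the (amenable, hence NF) images of $G$, pass to the finite partial products $G_1\times\dots\times G_k$ so that the kernels are decreasing and the infinite--intersection hypothesis of Proposition~\ref{embedding} is automatic, and then invoke \cite{BK} (Corollary~3.4.3 there) for the MF-ness of the reduced product. Your write-up is considerably more explicit than the paper's (which compresses all of this into three sentences), but the argument is the same.
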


\begin{proof}
Let $ G_n $, $ n \in \N $ be MF groups such that $ G \subseteq \prod_n G_n $. We may assume that each $ G_n $ is amenable and hence $ \prod_{n}^k G_n $ is MF. Now, the maps $ G \subseteq \prod_n G_n \to \prod_{n}^k G_n $ satisfies the hypothesis of Proposition \ref{embedding}. An application of Corollary 3.4.3 of \cite{BK} shows that $ G $ is MF.
\end{proof}

For instance all amenable residually finite groups satisfy Rosenberg's conjecture as can also easily be observed since their group $ C^* $-algebras are even residually finite dimensional.

In the setting of Proposition \ref{tower} with $ G_n $ amenable and, say residually finite, we can conclude that the group $ \star_H G_i $ is MF. We end the exposition by giving an example of an MF group which is a free product of non-abelian groups with infinite amalgamation.

\begin{example}
Consider the set
\begin{equation*}
G= \left\{ \begin{pmatrix} 1 & x & y \\ 0 & I & z \\ 0 & 0 & 1 \end{pmatrix} \Bigg| x,z \in \Z^n, \ y \in \Z \right\}
\end{equation*}
for $ n \in \N $. This is a (non-Abelian) group under matrix multiplication called the discrete Heisenberg group. It has a normal subgroup isomorphic to $ \Z^{n+1} $ consisting of the matrices with $ x=0 $ in the above presentation. Taking the quotient with this, we get $ \Z^{n} $, so in particular $ G $ is amenable. Let for $ k \in \N $, $ G_k $ be the normal subgroup of $ G $ given by
\begin{equation*}
G_k= \left\{ \begin{pmatrix} 1 & kx & ky \\ 0 & I & kz \\ 0 & 0 & 1 \end{pmatrix} \Bigg| x,z \in \Z^n, \ y \in \Z \right\}.
\end{equation*}
This has finite index in $ G $ and it follows that $ G $ is residually finite and hence satisfies Rosenberg's conjecture. Furthermore $ G \star_{G_k} G $ is an MF group for each $ k $, note here that the subgroup we amalgamate over is infinite and non-Abelian.

The infinite dihedral group provides another example of a residually finite amenable group and the Baumslag-Solitar groups $B(1,m) := \langle a,b \mid a^{-1} b a = b^m \rangle $, $ m \in \N $ also have these properties and so we could form suitable amalgamated free products of these to get more MF groups.

\end{example}

\end{document}